\title{\textbf{Generalizing Parking Functions with Randomness}}
\author{Melanie Tian and Enrique Trevi\~no}
\newtheorem{theorem}{Theorem}
\newtheorem{prop}{Proposition}
\newtheorem{lemma}{Lemma}
\begin{document}
\maketitle

\begin{abstract}
 Consider $n$ cars $C_1, C_2, \ldots, C_n$ that want to park in a parking lot with parking spaces $1,2,\ldots,n$ that appear in order. Each car $C_i$ has a parking preference $\alpha_i \in \{1,2,\ldots,n\}$. The cars appear in order, if their preferred parking spot is not taken, they take it, if the parking spot is taken, they move forward until they find an empty spot. If they do not find an empty spot, they do not park. An $n$-tuple $(\alpha_1, \alpha_2, \ldots, \alpha_n)$ is said to be a parking function, if this list of preferences allows every car to park under this algorithm. For an integer $k$, we say that an $n$-tuple is a $k$-Naples parking function if the cars can park with the modified algorithm, where park $C_i$ backs up $k$-spaces (one by one) if their spot is taken before trying to find a parking spot in front of them. We introduce randomness to this problem in two ways: 1) For the original parking function definition, for each car $C_i$ that has their preference taken, we decide with probability $p$ whether $C_i$ moves forwards or backwards when their preferred spot is taken; 2) For the $k$-Naples definition, for each car $C_i$ that has their preference taken, we decide with probability $p$ whether $C_i$ backs up $k$ spaces or not before moving forward. In each of these models, for an $n$-tuple $\alpha\in\{1,2,\ldots,n\}^n$, there is now a probability that the model ends in all cars parking or not. For each of these random models, we find a formula for the expected value. Furthermore, for the second random model, in the case $k =1$, $p=1/2$, we prove that for any $1\le t\le 2^{n-2}$, there is exactly one $\alpha\in\{1,2,\ldots,n\}^n$ such that the probability that $\alpha$ parks is $(2t-1)/2^{n-1}$.
\end{abstract}

\section{Introduction}
Consider $n$ cars $C_1, C_2, \ldots, C_n$ that want to park in a parking lot with parking spaces $1,2,\ldots,n$ that appear in order. Each car $C_i$ has a parking preference $\alpha_i \in \{1,2,\ldots,n\}$. The cars appear in order, if their preferred parking spot is not taken, they take it, if the parking spot is taken, they move forward until they find an empty spot. If they do not find an empty spot, they do not park. An $n$-tuple $(\alpha_1, \alpha_2, \ldots, \alpha_n)$ is said to be a parking function, if this list of preferences allows every car to park under this algorithm. That the number of parking functions is $(n+1)^{n-1}$ was first proved by Konheim and Weiss \cite{Konheim} using generating functions and then Pollak \cite{PollakProof}\footnote{Pollak's proof was published by Riordan instead of Pollak.} using a clever bijection. For more details on parking functions, the survey paper by Yan \cite{Yan} is a great resource.

One generalization of parking functions, introduced by Baumgardner \cite{Baumgardner}, concerns what's known as Naples parking functions. In this scenario, the cars that reach a taken parking spot, back up one spot before moving forward. For example, the tuple $(4,3,3,1)$ is not a parking function, but it is a Naples parking function. One could then generalize the concept to $k$-Naples parking functions, where cars back up $k$ spots before moving forward.  Christensen et. al. \cite{Harris-Naples} found the following recursive formula to find the number $N_k(n+1)$ of $k$-Naples parking functions on $n+1$ cars:
\begin{equation}\label{naples-orig}
N_{k}(n) = \sum_{i=0}^{n-1} {n-1\choose i}N_k(i)(n-i)^{n-i-2}(i+1+\min\{k,n-i-1\}).
\end{equation}

In a playful paper by Carlson, Christensen, Harris, Jones and Ramos Rodr\'iguez \cite{Harris-adventure}, the authors introduce many variants of parking functions and mention many open problems. In particular, chapter 1.9 of the paper suggests introducing randomness to Naples parking functions. Randomness had been considered by Diaconis and Hicks \cite{Diaconis} where they study whether a random parking function has certain properties. In our case, inspired by Carlson et. al, we consider models where we introduce randomness to the parking model (as opposed to randomly choosing a tuple and checking properties of it). The two random versions of parking functions we consider are
\begin{enumerate}
\item Suppose the spot $C_i$ wants is taken, then $C_i$ continues forward with probability $p$ or changes direction and goes backwards with probability $1-p$.
\item Suppose the spot $C_i$ wants is taken, then $C_i$ backs up $k$ spaces with probability $p$ before moving forward or just continues forward with probability $1-p$.
\end{enumerate}

For the first of these two models, the original paper on parking functions by Konheim and Weiss suggests that the expected number of parking functions is also $(n+1)^{n-1}$. We will explore this model and give a different proof of this result in section \ref{model 1}. The second model is the one we study more. In section \ref{random naples} we will prove a generalization of \eqref{naples-orig}, namely
\begin{theorem}\label{recursion thm}
Let $T_{k,p}(n)$ be the expected number of random $k$-Naples parking functions, then for $n\ge 1$,
\begin{equation}\label{recursive eq}
T_{k,p}(n)=\sum_{i=0}^{n-1}{{n-1\choose i}T_{k,p}(i){(n-i)}^{n-i-2}}(i+1+p\min\{k,n-i-1\}).
\end{equation}
\end{theorem}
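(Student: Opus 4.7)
The plan is to adapt the combinatorial argument used by Christensen et al.\ to prove the deterministic recursion \eqref{naples-orig} to the probabilistic setting. The two recursions have an identical shape, with the single modification that the term $\min\{k,n-i-1\}$ picks up a factor of $p$; this strongly suggests that the original argument can be reused with the random coin flips tracked carefully, and that the factor of $p$ attaches to exactly those preference choices that require a backup in order to park.

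I would begin by writing $T_{k,p}(n) = \sum_{\alpha\in\{1,\ldots,n\}^n} \Pr[\alpha\text{ parks}]$ and then, for each $\alpha$ of positive parking probability, introducing a decomposition index $i$: a distinguished group of $i$ cars whose preferences lie in $\{1,\ldots,i\}$ parks in spots $\{1,\ldots,i\}$, and the remaining $n-i$ cars park in spots $\{i+1,\ldots,n\}$. This decomposition should exist and be essentially unique for every realization of the coin flips that results in all cars parking, mirroring the analogous step for the deterministic case in \cite{Harris-Naples}.

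Once the decomposition is in place, the parking probability factorizes (up to a single interface car) across the two blocks, because the coin flips made by the cars of one block are independent of those made by the other. Summing over all $\alpha$ with a fixed $i$ then yields $\binom{n-1}{i}T_{k,p}(i)$ for the first block (choosing which $i$ of the $n-1$ non-distinguished cars lie in it, and invoking the induction hypothesis) together with $(n-i)^{n-i-2}(i+1+p\min\{k,n-i-1\})$ for the second block. The tree factor $(n-i)^{n-i-2}$ arises from a Cayley/Pr\"ufer-style analysis of the second block's preferences, exactly as in the deterministic proof, while the weighted factor $i+1+p\min\{k,n-i-1\}$ enumerates the $i+1$ interface preferences that succeed with probability $1$ together with the $\min\{k,n-i-1\}$ interface preferences that succeed only if the interface car's coin flip selects the backing-up behavior, which occurs with probability $p$.

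The main obstacle will be verifying the clean factorization of the parking probability across the two blocks. The subtle case is $n-i-1<k$, in which an interface car that chooses to back up could in principle reach into spots assigned to the first block; one must check that in the configurations counted by the formula those spots are already occupied, so the back-up either terminates harmlessly at the block boundary or propels the car forward without disturbing the first-block analysis. Summing the resulting contributions over $i$ then gives \eqref{recursive eq}.
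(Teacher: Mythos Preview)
Your proposal is essentially the paper's argument: partition successful outcomes by the spot $i+1$ at which car $C_n$ ends up, and factor the expected count as $\binom{n-1}{i}\,T_{k,p}(i)\,(n-i)^{n-i-2}\,(i+1+p\min\{k,n-i-1\})$. The one point you should make explicit is that the ``interface car'' is specifically $C_n$, the last car to arrive---this is what makes the block decomposition canonical, and since by the time $C_n$ arrives the left block $\{1,\dots,i\}$ is already full while spot $i+1$ is the unique vacancy, it also dispels your worry about back-ups crossing into the first block.
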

For example, Table \ref{evs}, includes the expected number $T(n)$ of $1$-Naples parking functions when $p = 1/2$,
\begin{table}[!ht]
\begin{center}
\begin{tabular}{|c|c|c|c|c|c|c|c|c|c|}\hline
$n$&1&2&3&4&5&6&7&8\\ \hline
$P(n)$         &1&3&16&125&1296&16807&262144& 4782969\\ \hline
$T(n)$         &1&3.5&20&163.25&1744.25&23121.375&366699& 6779029.0625\\ \hline
$\frac{N(n)+P(n)}{2}$&1&3.5&20&164&1760.5&23437&373107.5& 6920142.5\\ \hline
$N(n)$         &1&4&24&203&2225&30067&484071& 9057316\\ \hline
\end{tabular}
\end{center}
\caption{$N(n)$ is the number of Naples-parking functions, $P(n)$ is the number of parking functions and $T(n)$ is the expected number of random Naples parking functions (when $p=1/2$).}\label{evs}
\end{table}
We will investigate the case $k =1$ and $p=1/2$ deeply in section \ref{cool section}, where we study the distribution of the probabilities for different $n$-tuples and in particular we prove the following surprising result:
\begin{theorem}\label{favorite thm}
Given $n$ cars, there is one and only one parking preference for which the probability that every car parks is $\frac{2t-1}{2^{n-1}}$, where $t\in [1,2^{n-2}]$.
\end{theorem}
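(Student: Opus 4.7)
I would begin by computing $P(\alpha)$ for all $\alpha \in \{1,\ldots,n\}^n$ for small $n$ (say $n = 2, 3, 4$) to identify the candidate family achieving probabilities of the form $(2t-1)/2^{n-1}$. The pattern that emerges is the family
\[
\mathcal{F}_n = \{\alpha \in \{1,\ldots,n\}^n : \alpha_1 = \alpha_2,\ \alpha_n = 2,\ \alpha_i - \alpha_{i+1} \in \{0,1\} \text{ for } 1 \le i \le n-1\};
\]
for instance $\mathcal{F}_4 = \{(2,2,2,2),(3,3,2,2),(3,3,3,2),(4,4,3,2)\}$ with probabilities $7/8, 5/8, 3/8, 1/8$, matching the four required values of $(2t-1)/2^3$. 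The $n-2$ interior differences $(\alpha_i - \alpha_{i+1})_{2 \le i \le n-1}$ together with $\alpha_n = 2$ and $\alpha_1 = \alpha_2$ determine $\alpha$ uniquely, providing a bijection $\mathcal{F}_n \leftrightarrow \{0,1\}^{n-2}$ and hence $|\mathcal{F}_n| = 2^{n-2}$.

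The first half of the proof, by induction on $n$, verifies that for $\alpha \in \mathcal{F}_n$ the probability $P(\alpha)$ equals $(2t-1)/2^{n-1}$ for an explicit bijection $\alpha \leftrightarrow t$ onto $\{1, \ldots, 2^{n-2}\}$. In $\alpha \in \mathcal{F}_n$ car $C_2$ always faces the taken preferred spot $\alpha_2 = \alpha_1 = v$, so it flips a coin: backing up places $C_2$ deterministically at $v-1$, and moving forward places $C_2$ just past the occupied block. In each branch the residual problem for $C_3,\ldots,C_n$ either reduces (after a relabeling of positions) to a smaller instance of $\mathcal{F}$, or becomes a deterministic tail, so induction closes the recursion and yields a closed form for $t$ in terms of the binary code of $\alpha$.

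The second and principal obstacle is showing that no $\alpha \notin \mathcal{F}_n$ yields $P(\alpha) = (2t-1)/2^{n-1}$. Writing $P(\alpha) = m/2^{n-1}$, each success leaf at depth $d$ of the random decision tree contributes $2^{n-1-d}$ to $m$, so $m$ is odd iff the number of depth-$(n-1)$ success leaves is odd. My plan is to exhibit a fixed-point-free involution on these maximal-depth success leaves for any $\alpha \notin \mathcal{F}_n$: identify a step of the decision tree where the $\mathcal{F}_n$-structure fails---which, after case analysis on which defining condition is violated, corresponds to some car $C_i$ whose coin flip is ``wasted'' (both outcomes leading to downstream-equivalent configurations)---and pair leaves by flipping $C_i$'s outcome. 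Executing this uniformly across the three violation types (first two entries unequal, an interior gap exceeding $1$, or last entry not $2$) is where the bulk of the work lies; I would expect to combine direct case analysis with the inductive hypothesis at step $n-1$ to dispatch each case.
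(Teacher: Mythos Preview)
Your plan is essentially the paper's proof: your family $\mathcal{F}_n$ is exactly the set $A$ the paper isolates, and your ``wasted coin flip'' involution on maximal-depth leaves is precisely the paper's argument that if any defining condition of $\mathcal{F}_n$ fails then some $\beta_i$ is irrelevant (both values yield the same configuration), forcing $g(\alpha)$ even. The paper carries out that case analysis directly by induction on the position $m$ (showing the first $m$ cars always occupy one of two consecutive blocks), rather than by induction on $n$ as you suggest.

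The one execution difference worth noting is the recursion for computing $P(\alpha)$ on $\mathcal{F}_n$. You propose branching on $C_2$'s coin and recursing on $C_3,\ldots,C_n$, but after $C_2$ backs up to $v-1$ the residual process is not cleanly an instance of $\mathcal{F}_{n-1}$ (e.g.\ if $\alpha_3=\alpha_2$ then $C_3$'s coin becomes irrelevant in that branch, so the ``reduces to a smaller $\mathcal{F}$'' claim needs more care). The paper instead peels from the \emph{tail}: writing $\alpha=(\ldots,\underbrace{2,\ldots,2}_{m_2})$, the first $n-m_2$ cars occupy either $\{2,\ldots,n-m_2+1\}$ or $\{3,\ldots,n-m_2+2\}$, and the former event is exactly the event that the shifted $(n-m_2)$-tuple in $\mathcal{F}_{n-m_2}$ parks. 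This yields the closed form
\[
g(\alpha)=2^{n-1}-2^{n-m_2}+2^{n-m_2-1}-2^{n-m_2-m_3}+\cdots+2^{m_t-1}-1,
\]
from which oddness and injectivity (hence the bijection with $\{1,3,\ldots,2^{n-1}-1\}$) are immediate.
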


Finally, in section \ref{section ineqs} we study some asymptotics comparing the expected number of random $1$-Naples parking function to 1-Naples parking functions and to the usual parking function model. Namely, we will prove the following theorem
\begin{theorem}\label{thm comparing} Let $N(n) = N_1(n)$ be the number of Naples parking functions for $n$ cars, $P(n) = (n+1)^{n-1}$ be the number of parking functions, and $T(n) = T_{1/2,1}(n)$ be the expected number of random Naples parking functions for $n$ cars, with probability $1/2$ of going back one spot. Then
$$P(n) \le T(n)\le \frac{N(n)+P(n)}{2}.$$
\end{theorem}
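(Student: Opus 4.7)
The plan is to prove both inequalities simultaneously by induction on $n$, using the recursion from Theorem \ref{recursion thm} together with \eqref{naples-orig}. Specializing \eqref{recursive eq} to $(k,p) = (1, 1/2)$ gives
\[
T(n) = \sum_{i=0}^{n-1}\binom{n-1}{i}\, T(i)\,(n-i)^{n-i-2}\, c_T(i), \qquad c_T(i) = i+1+\tfrac{1}{2}\min\{1,\, n-i-1\},
\]
while \eqref{naples-orig} with $k = 1$ and with $k = 0$ (the latter being the recursion for $P(n)$ since $N_0 = P$) produce the same-shape recursions for $N(n)$ and $P(n)$ with coefficients $c_N(i) = i+1+\min\{1, n-i-1\}$ and $c_P(i) = i+1$. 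The crucial structural observation is the identity $c_T(i) = (c_N(i) + c_P(i))/2$.

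For the lower bound $P(n) \le T(n)$, the inductive hypothesis $P(i) \le T(i)$ together with the elementary inequality $c_P(i) \le c_T(i)$ yields $P(i)\, c_P(i) \le T(i)\, c_T(i)$ term by term, so summing against $\binom{n-1}{i}(n-i)^{n-i-2}$ across $i$ closes the induction immediately.

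For the upper bound, I would rewrite the $T$-recursion using $2c_T(i) = c_N(i)+c_P(i)$ as
\[
2T(n) = \sum_{i=0}^{n-1}\binom{n-1}{i}(n-i)^{n-i-2}\, T(i)\,\bigl[c_N(i)+c_P(i)\bigr],
\]
so it suffices to show $T(i)\bigl[c_N(i)+c_P(i)\bigr] \le N(i)c_N(i)+P(i)c_P(i)$ for each $i<n$. Substituting the inductive hypothesis $T(i) \le (N(i)+P(i))/2$ reduces this after a brief expansion to
\[
(N(i) - P(i))\,(c_N(i) - c_P(i)) \ge 0,
\]
which holds since every parking function is a $1$-Naples parking function (so $N(i)\ge P(i)$) and $c_N(i) \ge c_P(i)$ by inspection.

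The only real content is spotting the linear relation $c_T = (c_N+c_P)/2$; once that is in hand, both halves of the theorem fall out of the same induction with minimal bookkeeping. The base case $n = 1$ is trivial ($T(1) = P(1) = N(1) = 1$), and at the boundary index $i = n-1$ all three coefficients collapse to $n$, so the product inequality remains valid. I do not anticipate any serious obstacle beyond identifying this decomposition and running the joint induction cleanly.
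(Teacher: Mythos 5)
Your proposal is correct and follows essentially the same route as the paper: induction on $n$ via the three recursions, using the fact that the coefficient in the $T$-recursion is the average of those in the $N$- and $P$-recursions, with the upper bound reducing to the nonnegativity of $(N(i)-P(i))(c_N(i)-c_P(i))$ — which is exactly the correction term the paper subtracts off in its final display. The only cosmetic difference is that you verify the inequality term by term while the paper splits the sum globally into $N(n)/2$ and $P(n)/2$ plus an error term; the algebra is identical.
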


\section{Random Direction Parking}\label{model 1}
Here we will consider the model where if spot $\alpha_i$ is taken for car $C_i$, then with probability $p$ they look for a parking spot ahead of them and with probability $1-p$ they look for a parking spot behind them. For example, the tuple $(1,2,2,1)$ has probability $p^2$ of parking. Let $R_p(n)$ be the expected number of parking functions, i.e.,
$$R_p(n) = \sum_{\alpha\in \{1,2,\ldots,n\}^n} \mathbb{P}(\alpha \text{ parks}).$$ The following theorem was first proved by Konheim and Weiss, but we include a different proof here.

\begin{theorem} For any positive integer $n$, $R_p(n) = (n+1)^{n-1}$.
\end{theorem}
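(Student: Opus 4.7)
The plan is to adapt Pollak's circular bijection argument to the random-direction setting. Consider $n+1$ parking spots arranged in a cycle, labeled $1, 2, \ldots, n+1$, and allow preference tuples $\alpha \in \{1,\ldots,n+1\}^n$. Cars park in order with the same random rule: when the preferred spot is taken, the car moves forward (clockwise) with probability $p$ or backward (counterclockwise) with probability $1-p$, continuing in that direction until it finds an empty spot. Because there are $n+1$ spots and only $n$ cars, every car parks with probability $1$ in the circular model, and exactly one spot is left vacant.

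The central step is a probability-preserving coupling between the random linear model and the random circular model. Fixing the same forward/backward random bits for each car in both models, I would prove that for any $\alpha \in \{1,\ldots,n\}^n$,
\begin{equation*}
\mathbb{P}(\alpha \text{ parks on the line}) = \mathbb{P}(\text{spot } n+1 \text{ is empty in the cycle}).
\end{equation*}
For $(\Leftarrow)$, if spot $n+1$ is vacant at the end of the circular process then no car ever occupies it, hence no car ever wraps around past spot $n+1$ in either direction (otherwise the wrapping car would find $n+1$ empty and stop there); consequently each car parks at the same spot in both models, and no car in the linear model runs off either end. For $(\Rightarrow)$, the same inductive argument shows that if every car parks on the line, then no car needs to cross $n+1$ in the cycle, so the linear and circular trajectories coincide and $n+1$ remains empty.

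Next, I would observe that the sum defining $R_p(n)$ extends for free from $\{1,\ldots,n\}^n$ to $\{1,\ldots,n+1\}^n$: if some $\alpha_i = n+1$, then the first such car necessarily parks at $n+1$, so the event ``spot $n+1$ is empty'' has probability zero. Therefore
\begin{equation*}
R_p(n) = \sum_{\alpha \in \{1,\ldots,n+1\}^n} \mathbb{P}(\text{spot } n+1 \text{ is empty in the cycle}).
\end{equation*}
Finally I would exploit rotational symmetry: adding a constant $c \pmod{n+1}$ to every entry of $\alpha$ is a bijection on $\{1,\ldots,n+1\}^n$ that shifts the distribution of the empty spot by $c$. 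Summing over all $(n+1)^n$ tuples, every spot is the empty one with equal aggregate probability $(n+1)^{n-1}$, which is what we want.

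The main obstacle will be making the coupling argument airtight: I must justify by induction on the car index that, on the event ``spot $n+1$ stays empty throughout the circular process,'' the occupied sets and hence the random choices in the two models agree at every step, so the coupling is well defined on the common probability space. Once that bookkeeping is in place, the circular symmetry conclusion is immediate and the whole proof is independent of $p$, explaining why the expected count matches the deterministic count $(n+1)^{n-1}$.
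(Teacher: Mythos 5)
Your proposal is correct and follows essentially the same route as the paper: both pass to Pollak's circular model with $n+1$ spots, identify ``$\alpha$ parks on the line'' with ``spot $n+1$ stays empty on the cycle,'' note that tuples using preference $n+1$ contribute nothing, and finish by rotational symmetry and linearity of expectation. Your write-up makes the coupling between the linear and circular processes more explicit than the paper does, but the underlying argument is the same.
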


\begin{proof} Consider an $n$-tuple $\alpha\in\{1,2,\ldots, n+1\}^n$. Consider having $n+1$ parking spots in a circle, so the spot $n+1$ represents that someone didn't park. Let $v(\alpha)$ be the vector where the $i$-th entry is the probability that parking spot $i$ is not taken under this circular random parking model. Note that if you shift every entry in $\alpha$ by 1 modulo $n+1$, then the distribution of $v(\alpha)$ shifts by 1 to the right. Therefore (since the sum of entries in each $v(\alpha)$ is 1),
$$\sum_{\alpha\in\{1,2,\ldots,n+1\}^n} v(\alpha) = \frac{1}{n+1}\left((n+1)^n, (n+1)^n, \ldots, (n+1)^n\right).$$
Therefore, if you consider $n$-tuples $\alpha\in\{1,2,\ldots,n+1\}^n$, the expected number would be $(n+1)^{n-1}$ by linearity of expectation. We want to prove the expectation is $(n+1)^{n-1}$ when only considering tuples $\alpha\in\{1,2,\ldots,n\}^n$. The key observation is that if $\alpha\in \{1,2,\ldots,n+1\}$ and $\alpha\not\in\{1,2,\ldots,n\}$, then at least one car wants to park in $n+1$ and whoever gets there first takes the parking spot. Therefore, $n+1$ is not empty for those $\alpha$'s and they don't contribute anything to $R_p(n)$.

Therefore $R_p(n) = (n+1)^{n-1}$.
\end{proof}

The next result is a statement about the probability that a particular $n$-tuple ``parks''.
\begin{prop}
Let $\alpha\in\{1,2,\ldots,n\}^n$. If $\alpha$ is a permutation of $\{1,2,\ldots,n\}$, then $\mathbb{P}(\alpha \text{ parks}) = 1$, otherwise
$$0 < \mathbb{P}(\alpha \text{ parks}) < 1.$$
\end{prop}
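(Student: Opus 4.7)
The plan is to treat the case $\alpha$ is a permutation separately from the case $\alpha$ is not. If $\alpha$ is a permutation of $\{1,\ldots,n\}$, no car ever meets a taken spot, so every car parks at its preferred slot and $\mathbb{P}(\alpha \text{ parks})=1$ deterministically. The rest of the proof handles the case that $\alpha$ is not a permutation, where both $\mathbb{P}(\alpha \text{ parks})>0$ and $\mathbb{P}(\alpha \text{ parks})<1$ must be established; throughout we tacitly assume $0<p<1$.

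For the lower bound, I will produce one specific (adaptively chosen) coin-flip sequence whose realization has positive probability and parks every car. Build the sequence inductively: after $C_1,\ldots,C_{i-1}$ have parked, at most $i-1<n$ spots are occupied, so at least one spot is empty. If $\alpha_i$ is empty, $C_i$ parks there without flipping. Otherwise, the nonempty set of empty spots lies in $\{1,\ldots,\alpha_i-1\}\cup\{\alpha_i+1,\ldots,n\}$, so at least one of the two sides contains an empty spot; prescribe $C_i$'s flip so that $C_i$ heads in that direction, and $C_i$ parks at the first empty spot on that side. At most $n$ flips are prescribed in this way, and each prescribed outcome has probability at least $\min(p,1-p)$, so the specified event has probability at least $\min(p,1-p)^n>0$ and implies that every car parks.

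For the upper bound, I will use the following lemma, proved by a short count on $|\{i:\alpha_i\le k\}|$: an $n$-tuple $\alpha\in\{1,\ldots,n\}^n$ is simultaneously a forward parking function and a backward parking function (i.e., $(n+1-\alpha_1,\ldots,n+1-\alpha_n)$ is a forward parking function) if and only if $\alpha$ is a permutation of $\{1,\ldots,n\}$. The forward condition reads $|\{i:\alpha_i\le k\}|\ge k$ for every $k$, while the backward condition, after substitution, unpacks to $|\{i:\alpha_i\le k\}|\le k$ for every $k$; together these force $|\{i:\alpha_i=k\}|=1$ for every $k$. Because $\alpha$ is not a permutation, it fails at least one of these two conditions, so either the all-forward flip sequence or the all-backward flip sequence leaves some car unparked. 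The corresponding event has probability at least $\min(p,1-p)^n>0$, so $\mathbb{P}(\alpha \text{ fails})>0$ and $\mathbb{P}(\alpha \text{ parks})<1$.

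The main subtlety is the bookkeeping needed to interpret the smart-choice sequence and the all-forward/all-backward sequences as honest events in the underlying product probability space and to bound their probabilities below by $\min(p,1-p)^n$. Once the characterization lemma is in hand, both halves of the proposition follow immediately, so I do not anticipate any serious obstacle.
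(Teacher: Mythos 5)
Your proposal is correct, and the permutation case and the lower bound $\mathbb{P}(\alpha \text{ parks})>0$ coincide with the paper's argument (in both cases one observes that a conflicted car always has an empty spot on at least one side and prescribes that direction). The upper bound is where you genuinely diverge. The paper argues by an extremal exchange: among all choice sequences that park $\alpha$, it picks one maximizing the largest index $i$ of a car whose preferred spot is taken, reverses that car's direction, and argues this must lead to a failure, since otherwise a later car would be forced into a conflict, contradicting maximality. You instead observe that the all-forward realization parks everyone iff $\alpha$ is a classical parking function, the all-backward realization parks everyone iff $(n+1-\alpha_1,\ldots,n+1-\alpha_n)$ is one, and that the two counting criteria $|\{i:\alpha_i\le k\}|\ge k$ and $|\{i:\alpha_i\le k\}|\le k$ together force $\alpha$ to be a permutation; so a non-permutation admits an explicit deterministic flip sequence of positive probability (here you rightly note the implicit hypothesis $0<p<1$, which the paper leaves unstated but needs) under which some car fails. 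Your route imports the standard characterization of parking functions, which the paper never invokes, but in exchange it is cleaner and arguably more airtight than the paper's exchange argument, whose claim that reversing $C_i$ propagates a conflict to a later car requires some care about how the subsequent cars' trajectories change. Both proofs are valid; yours identifies the failing event explicitly, while the paper's is self-contained.
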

\begin{proof}
When $\alpha$ is a permutation of $\{1,2,\ldots, n\}$, then every car takes their preferred spot and everyone parks, so $\mathbb{P}(\alpha \text{ parks}) = 1$. We can now assume $\alpha$ is not a permutation of $\{1,2,\ldots,n\}$. Therefore, there is at least one pair of cars that want the same parking spot. We will first show $\mathbb{P}(\alpha \text{ parks}) > 0$. Note that anytime a car reaches a preferred spot that is taken, then at least one of the two directions has an empty spot, so the car can choose that direction and park. This is true for any car, therefore $\mathbb{P}(\alpha \text{ parks}) > 0$.

We will now prove that $\mathbb{P}(\alpha \text{ parks}) < 1$. Suppose a tuple $\alpha$ parked after a sequence of correct choices. Now consider all possible choices that lead to a successful parking configuration for that tuple $\alpha$. Among these finitely many choices, consider one that had a car $C_i$ with $i$ as large as possible, where $C_i$'s preferred parking spot is taken. Take the same choices that led to this parking spot, except that when one reaches $C_i$, the car $C_i$ goes in the opposite direction. If $C_i$ finds a parking spot, then there is a $j > i$ where $C_j$ had taken that spot under the previous choice, so there is a larger $j$ with $C_j$ reaching a conflict. The only alternative is that $C_i$ didn't park. Therefore, $\mathbb{P}(\alpha \text{ parks}) < 1$.
\end{proof}

\section{Generalizing $k$-Naples with an Unfair Coin}\label{random naples}
In this section we are considering the parking model where there are $n$ cars, $C_1, C_2, \ldots, C_n$, where car $C_i$ prefers to park at spot $\alpha_i$. Each car goes in turns, $C_1$ first, $C_2$ second, etc., where car $C_i$ parks at $\alpha_i$ if the spot is not taken. If $\alpha_i$ is taken, then with probability $p$, $C_i$ moves $k$ spaces backwards (if the number of spaces behind is less than $k$, then that's how far behind they go) before looking for a parking spot in front of them. To illustrate the model, Table \ref{table: random-Naples example} shows the probability that a $3$-tuple of preferences parks when $k = 1$.

\begin{table}[!ht]
\begin{center}
\begin{tabular}{|c|c|c|c|c|c|c|c|c|c|}
  \hline
  $\alpha$                           &  $111$  & $112$  & $113$ & $121$ & $122$ & $123$ & $131$ & $132$ & $133$ \\ \hline
  $\mathbb{P}(\alpha \text{ parks})$ &    1    &  1     &   1   &   1   &  1    &   1   &  1    &    1  &  $p$  \\ \hline
  $\alpha$                           &  $211$ & $212$  & $213$ & $221$ & $222$         & $223$  & $231$ & $232$ & $233$  \\ \hline
  $\mathbb{P}(\alpha \text{ parks})$ &    1   &   1    &   1   &   1   &  $1-(1-p)^2$  &  $p$   &   1   &  $p$     & 0 \\ \hline
  $\alpha$                           &  $311$ & $312$  & $313$ & $321$ & $322$ & $323$ & $331$  & $332$ & $333$  \\ \hline
  $\mathbb{P}(\alpha \text{ parks})$ &    1   &   1    &  $p$  &    1  &  $p$  &   0   &  $p$   &  $p^2$&  0\\   \hline
\end{tabular}
\end{center}
\caption{All 3-tuples $\alpha$ of parking preferences (written $\alpha_1\alpha_2\alpha_3$ instead of $(\alpha_1,\alpha_2,\alpha_3)$) and the probability that they park under the random 1-Naples parking model.}\label{table: random-Naples example}
\end{table}

Let $T_{k,p}(n)$ be the expected number of parking functions under the random $k$-Naples model, i.e.,
$$T_{k,p}(n) = \sum_{\alpha\in\{1,2,\ldots,n\}^n} \mathbb{P}(\alpha \text{ parks}).$$

\begin{proof}[Proof of Theorem \ref{recursion thm}]
The set of all successful parking processes for $n$ cars can be partitioned into $n$ sets depending on where the $n$-th car eventually parks. Suppose the $n$-th car eventually parks at spot $i+1$, where $i\in[0,n-1]$. Right before the $n$-th car is going to park, spot $i+1$ has to be open, and $i$ cars have taken the $i$ spots left of spot $i+1$, and $n-i-1$ cars have taken the $n-i-1$ spots to the right of spot $i+1$.

First we choose $i$ cars that take the left $i$ spots, there are ${n-1\choose i}$ ways of doing so. Then the expected number of preferences for them to park accordingly is $T_{k,p}(i)$. The expected number of preferences for $n-i-1$ cars to take the right $n-i-1$ spots is $(n-i)^{n-i-2}$ because this is the usual parking function situation. Finally the $n$-th car could have any preference from 1 to $i+1$ which guarantees they park at location $i+1$ or they could have preference between $i+2$ and $\min{\{i+k+1,n\}}$ and flipped a coin to back up $k$ spots, which has probability $p$ of happening. This contributes a factor of $(i+1+p\min\{k,n-i-1\})$. Since $i$ ranges from 0 to $n-1$, using linearity of expectation we get our desired sum.
\end{proof}

\section{Distribution of Random 1-Naples}\label{cool section}
In this section we dive deeper into the random Naples parking in the case when $k =1$ and $p=1/2$. In particular we study the distribution of the probabilities for different $n$-tuples. For a real number $q\in[0,1]$, let $f(q)$ be the number of $\alpha\in\{1,2,\ldots,n\}$ such that $\mathbb{P}(\alpha \text{ parks}) = q$. For example, Table \ref{table distribution}, finds all relevant $f(q)$'s for $n=7$ other than $f(1)$.
\begin{table}[!ht]
\begin{center}
\begin{tabular}{|c|c|c|c|c|c|c|c|c|}\hline
$q$& 0 & 1/64 & 2/64 & 3/64 & 4/64 & 5/64 & 6/64 & 7/64  \\ \hline
$f(q)$& 339472& 1& 136& 1& 2194& 1& 209& 1\\ \hline
$q$&8/64& 9/64 & 10/64 & 11/64 & 12/64 & 13/64 & 14/64 & 15/64  \\ \hline
$f(q)$&12466&1& 140& 1& 3107& 1& 143& 1\\ \hline
$q$&16/64& 17/64 & 18/64 & 19/64 & 20/64 & 21/64 & 22/64 & 23/64  \\ \hline
$f(q)$&40610& 1& 141& 1& 1361& 1& 74& 1\\ \hline
$q$&24/64& 25/64 & 26/64 & 27/64 & 28/64 & 29/64 & 30/64 & 31/64  \\ \hline
$f(q)$&14253& 1& 75&1& 1589& 1& 148& 1\\ \hline
$q$&32/64&33/64 & 34/64 & 35/64 & 36/64 & 37/64 & 38/64 & 39/64  \\ \hline
$f(q)$&94792& 1& 30& 1& 1171& 1& 33&1\\ \hline
$q$&40/64& 41/64 & 42/64 & 43/64 & 44/64 & 45/64 & 46/64 & 47/64  \\ \hline
$f(q)$&4861&1& 104& 1& 576& 1& 37& 1\\ \hline
$p$&48/64& 49/64 & 50/64 & 51/64 & 52/64 & 53/64 & 54/64 & 55/64  \\ \hline
$f(q)$&35324&1& 35& 1& 614& 1& 38& 1\\ \hline
$p$&56/64& 57/64 & 58/64 & 59/64 & 60/64 & 61/64 & 62/64 & 63/64  \\ \hline
$f(q)$&6819&1& 39& 1& 734& 1& 42& 1\\ \hline
\end{tabular}
\end{center}
\caption{Distribution of probability for $n=7$, $q$ for probability and $f(q)$ for number of preferences of probability $q$.  }\label{table distribution}
\end{table}

There are several patterns that appear in Table \ref{table distribution} that we are able to prove are satisfied, namely, we will prove
\begin{enumerate}
    \item\label{pattern 1} We have that $f(1)$ is the number of parking functions of length $n$, i.e., $f(1) = (n+1)^{n-1}$.
    \item\label{pattern 2} We have that $f(0)$ is the number of tuples that are not 1-Naples parking functions.
    \item\label{pattern 3} We have $f(q) > 0$ if and only if $q$ can be written as $a/b$ with $b=2^{n-1}$ and $0\le a\le 2^{n-1}$.
    \item\label{pattern 4} We have $f(q) = 1$ if and only if $q$ can be written as $a/b$ with $b = 2^{n-1}$ and $1\le a\le 2^{n-1}$ odd.
\end{enumerate}
The first two items show that this random model in some sense measures how close to a parking function a tuple was originally, and it shows that only tuples that ``park'' under the Naples model get positive probability. Our favorite (and hardest to prove) of these patterns is the fourth one, which we called Theorem \ref{favorite thm} in the Introduction.

Before we can prove these patterns, let's translate the probabilistic model into a counting problem. For an $n$-tuple $\alpha$, we want to calculate $\mathbb{P}(\alpha)$. Consider all tuples $(b_2,b_3,\ldots, b_n)\in\{0,1\}^{n-1}$. If car $C_i$'s preferred spot $\alpha_i$ is taken, we can simulate the model by saying that $C_i$ moves forward if $b_i = 1$ and it goes one spot back before moving forward if $b_i = 0$. If we let $g(\alpha)$ be the number of tuples $(b_2,\ldots,b_n)$ for which $\alpha$ parks, then
\begin{equation}\label{eq: counting}
\mathbb{P}(\alpha \text{ parks}) = \frac{g(\alpha)}{2^{n-1}}.
\end{equation}

The following lemma is crucial in proving patterns \ref{pattern 1} and \ref{pattern 2}
\begin{lemma}\label{main lemma}
Let $n$ be a positive integer. Suppose that $\alpha = (\alpha_1,\alpha_2,\ldots,\alpha_n)\in\{1,2,\ldots,n\}^n$ parks for the choices $\beta=(\beta_2,\beta_3,\ldots,\beta_n)\in\{0,1\}^{n-1}$. If any $\beta_i = 1$, then $\alpha$ also parks when $\beta_i=1$ is replaced by $\beta_i' = 0$.
\end{lemma}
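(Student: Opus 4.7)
My plan is to prove the lemma by a car-by-car coupling of the $\beta$-process and the $\beta'$-process, where $\beta'$ agrees with $\beta$ except at coordinate $i$. Cars $C_1, \ldots, C_{i-1}$ see identical histories in both processes, so I only need to track what happens from $C_i$ onward. I would first dispose of the trivial cases: if $\alpha_i$ is empty when $C_i$ arrives then $\beta_i$ is irrelevant, and if both $\alpha_i$ and $\alpha_i - 1$ are occupied then backing up before moving forward is indistinguishable from just moving forward. In both situations the two processes are literally identical, so there is nothing to prove. The only nontrivial case is when $\alpha_i$ is taken while $\alpha_i - 1$ is empty: here $C_i$ parks at $\alpha_i - 1$ under $\beta'$, while under $\beta$ it parks at some $s_i \geq \alpha_i$.

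From that moment on I would set up the following invariant, to be maintained as each subsequent car is processed with the same bit $\beta_j$ in both copies: either the two processes have merged into identical configurations, or the sets of empty spots satisfy $E_{\beta'} = (E_\beta \setminus \{t'\}) \cup \{t\}$ for some spots $t' < t$. Initially $t' = \alpha_i - 1$ and $t = s_i$, so the invariant holds. The inductive step consists of running $C_j$ in both processes and verifying that $C_j$ parks in $\beta'$ whenever it parks in $\beta$, and that the invariant propagates (possibly with an updated pair $t' < t$) or the two configurations merge.

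The structural reason the invariant works is the inequality $t' < t$: trading an empty slot at position $t'$ for one at position $t > t'$ only helps a forward-moving search. Any forward search in $\beta$ that would stop at some spot $q$ can be matched in $\beta'$ by the corresponding search, which either hits the same $q$ (if $q \notin \{t, t'\}$), stops earlier at $t$, or runs slightly further and then stops at $t$. In particular, a searching car in $\beta'$ never needs to reach past position $t \leq n$, so it always parks; and when it does, one verifies by direct computation that the new symmetric difference is again a single swap $(t'^{\text{new}}, t^{\text{new}})$ with $t'^{\text{new}} < t^{\text{new}}$, or else collapses to the empty difference. Iterating the invariant through $C_n$ then yields that $\alpha$ parks under $\beta'$.

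The main obstacle is the thoroughness of the case analysis in the inductive step — there are roughly half a dozen sub-cases once one splits on whether $\alpha_j$ equals $t$, $t'$, or neither; on the value of $\beta_j$; and, when $\beta_j = 0$, on the position of $\alpha_j - 1$ relative to $t$ and $t'$. None of the individual cases is deep, but the bookkeeping needed to verify both that $C_j$ parks and that the swap structure persists constitutes the bulk of the work. A conceptually cleaner alternative would be to argue directly that the final permutation $\sigma'$ is obtained from $\sigma$ by a chain of leftward transpositions of parked positions, but this reformulation ends up encoding the same case analysis.
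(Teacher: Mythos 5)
Your proposal is correct, but it is a genuinely different argument from the one in the paper. The paper argues by contradiction on the \emph{final} configuration: assuming the flipped choice causes a failure, it takes the last unfilled spot $e$ and the last unparked car $C_j$, locates a car $C_{k_1}$ occupying the spot $A_j$ that $C_j$ held in the original run, shows $e<\alpha_{k_1}\le A_j-1$, and iterates to produce an infinite strictly decreasing chain of occupied positions above $e$ --- a descent argument that never examines the process step by step. You instead run the two processes in parallel and maintain the invariant that the occupied sets agree except for one transposition $\{t',t\}$ with $t'<t$, the $\beta'$-run having its hole farther to the right; this is the standard ``exchange/abelian'' technique for parking processes. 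Your invariant is correct: the initial pair is $(\alpha_i-1,\,s_i)$, and in every branch of the case analysis the pair either collapses or is replaced by another pair with the same orientation (e.g.\ if the $\beta$-car grabs the low hole $t'$, the $\beta'$-car's forward search is guaranteed to terminate at some spot in $(t',t]$ because $t\in E_{\beta'}$ and $t\le n$, yielding the new pair $(q',t)$ or a merge; if the $\beta'$-car grabs $t$ early, the new pair becomes $(t',q)$ with $q>t$). I verified the delicate sub-cases $\alpha_j=t$, $\alpha_j=t'$, and $\alpha_j-1\in\{t,t'\}$ with $\beta_j=0$, and the orientation $t'<t$ survives each of them, so the only thing missing from your write-up is the mechanical enumeration you already flagged. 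Your route is longer to write out in full but arguably more robust and more informative (it yields a step-by-step comparison of the two runs, not just the final yes/no), whereas the paper's descent is shorter but leans on a somewhat delicate choice of ``last empty spot'' and ``last failing car.''
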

\begin{proof} Let $A_1, A_2, \ldots, A_n$ be the parking locations of cars $C_1, C_2, \ldots, C_n$ when the parking preference is $\alpha$ and the choices on whether continuing forwards or taking back a step are $\beta$. Now replace $\beta_i$ from 1 to 0. Suppose $\alpha$ doesn't park under these new conditions. Let $e$ be the last spot where no one parked, and let $C_j$ be the last car that doesn't park. Since $C_j$ doesn't park, it means there is a car $C_{k_1}$ with $k_1<j$ that parked at $A_j$. Then $e<\alpha_{k_1}\le A_j-1$ or $\alpha_{k_1} = A_j+1$ because if $\alpha_{k_1}\le e$, then $C_{k_1}$ would not leave $e$ empty, if $C_{k_1}= A_j$ then $A_j$ wouldn't be open for $C_j$ under the initial configuration, and if $C_{k_1} \ge A_j+2$, then by definition $C_{k_1}$ cannot take a place before $A_j+1$. We will prove $\alpha_{k_1} \ne A_j+1$. If $\alpha_{k_1} = A_j+1$, then $A_j+1$ must be taken by another car, say $C_{k_2}$ with $k_2< k_1$, but since $A_{k_1}\ne A_j$ and $k_1 < j$, $A_{k_1} = A_j+1$. This implies that $\alpha_{k_2} = A_j+2$. The same reasoning implies there exists a car $C_{k_3}$ with $k_3<k_2$ such that $\alpha_{k_3} = A_j+3$, and so on. But this leads to a contradiction because, eventually, you run out of cars or you run out of parking spots. Therefore $e< \alpha_{k_1} \le A_j-1$. Since $A_{k_1} \ne A_j$, that means that there must have been a car $C_{k_2}$ that took spot $A_{k_1} \le A_j-1$. The same reasoning as for $\alpha_{k_1}$ shows that $e<\alpha_{k_2} \le A_{k_1} -1 \le A_j-2$. But then there exist cars $C_{k_3}, C_{k_4}, \ldots$ such that $e<\alpha_{k_3} \le A_j-3,$ $e<\alpha_{k_4} \le A_j-4$, etcetera. This leads to a contradiction as there are only finitely many cars.
\end{proof}

With the lemma in hand, we can prove the following two theorems:

\begin{theorem}\label{thm 1-> parking}
For a positive integer $n$, $\alpha\in\{1,2,\ldots, n\}^n$ is a parking function if and only if
$$\mathbb{P}(\alpha \text{ parks}) = 1.$$
\end{theorem}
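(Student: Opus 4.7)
The plan is to combine Lemma \ref{main lemma} with the counting interpretation of probabilities given by \eqref{eq: counting}. The key observation is that the choice vector $\beta = (1, 1, \ldots, 1) \in \{0,1\}^{n-1}$, in which no car ever backs up, simulates exactly the standard parking algorithm. So the two directions of the theorem essentially reduce to translating between ``parking under the standard algorithm'' and ``parking under $\beta = (1, 1, \ldots, 1)$''.

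For the easier direction, I would assume $\mathbb{P}(\alpha\text{ parks}) = 1$, so by \eqref{eq: counting} we have $g(\alpha) = 2^{n-1}$, meaning $\alpha$ parks under every $\beta \in \{0,1\}^{n-1}$. Specializing to $\beta = (1, 1, \ldots, 1)$ then shows that $\alpha$ parks in the standard no-backing-up algorithm, i.e., $\alpha$ is a parking function.

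For the other direction, assume $\alpha$ is a parking function. Then $\alpha$ parks under $\beta = (1, 1, \ldots, 1)$. For an arbitrary $\beta' \in \{0,1\}^{n-1}$, I would reach $\beta'$ from $(1, 1, \ldots, 1)$ by flipping the entries that should become $0$ one at a time. By Lemma \ref{main lemma}, each such flip (from $1$ to $0$) preserves the fact that $\alpha$ parks. Iterating, $\alpha$ parks under $\beta'$, so $g(\alpha) = 2^{n-1}$ and $\mathbb{P}(\alpha\text{ parks}) = 1$ by \eqref{eq: counting}.

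There is no real obstacle here beyond invoking the lemma correctly: all the genuine content lives in Lemma \ref{main lemma}, which established the monotonicity that backing up can only help. The theorem is a short two-line consequence once that monotonicity is in hand.
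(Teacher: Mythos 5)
Your proof is correct and follows essentially the same route as the paper's: both directions hinge on identifying $\beta = (1,1,\ldots,1)$ with the standard parking algorithm, and the harder direction uses Lemma \ref{main lemma} to flip $1$'s to $0$'s one at a time to reach an arbitrary $\beta'$. No substantive difference.
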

\begin{proof} Suppose $\mathbb{P}(\alpha \text{ parks}) = 1$, then, in particular, $\alpha$ parks when $b_2=b_3=\ldots=b_n = 1$, which implies $\alpha$ is a parking function.

Now, let's assume $\alpha$ is a parking function. Therefore $\beta = (1,1,\ldots,1)\in\{0,1\}^{n-1}$ leads $\alpha$ to park. From Lemma \ref{main lemma}, that means that we can replace any 1 by a 0. Repeating this process for all replacements, we can get to any $\beta'\in\{0,1\}^{n-1}$. Therefore, $\mathbb{P}(\alpha \text{ parks}) = 1$.
\end{proof}

\begin{theorem}\label{th, 0<-> not naples}
For a positive integer $n$, $\alpha\in\{1,2,\ldots,n\}^n$ is not a Naples parking function if and only if
$$\mathbb{P}(\alpha \text{ parks})=0.$$
\end{theorem}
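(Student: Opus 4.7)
The plan is to derive this theorem as a direct consequence of Lemma \ref{main lemma}. The crucial observation is that the all-zero choice vector $\beta = (0,0,\ldots,0) \in \{0,1\}^{n-1}$ implements exactly the deterministic $1$-Naples algorithm: every car whose preferred spot is taken backs up one spot before scanning forward. Consequently, $\alpha$ is a Naples parking function if and only if $\alpha$ parks under this all-zero $\beta$, and the theorem reduces to showing that $g(\alpha) = 0$ if and only if $\alpha$ fails to park under the all-zero choice.

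For the $(\Leftarrow)$ direction, I would argue directly. If $\mathbb{P}(\alpha \text{ parks}) = 0$, then $g(\alpha) = 0$ by \eqref{eq: counting}, so no $\beta \in \{0,1\}^{n-1}$ leads $\alpha$ to park. In particular the all-zero $\beta$ fails, hence $\alpha$ is not a Naples parking function.

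For the $(\Rightarrow)$ direction, I would use the contrapositive. Assume $\mathbb{P}(\alpha \text{ parks}) > 0$, so there exists some $\beta \in \{0,1\}^{n-1}$ under which $\alpha$ parks. I would then iteratively apply Lemma \ref{main lemma}: every coordinate equal to $1$ may be flipped to $0$ while preserving the fact that $\alpha$ parks. After at most $n-1$ applications of the lemma, every coordinate of the choice vector is $0$, and $\alpha$ still parks under this all-zero vector. By the opening observation, $\alpha$ is therefore a Naples parking function.

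Since all the substantive work is already packaged inside Lemma \ref{main lemma}, the only step requiring care is recognizing that the all-zero $\beta$ encodes precisely the Naples algorithm; after that, the double implication falls out by direct inspection together with the iterated application of the lemma. I do not expect a genuine obstacle at this stage.
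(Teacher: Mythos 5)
Your proposal is correct and follows essentially the same route as the paper: both directions rest on the observation that the all-zero choice vector realizes the deterministic Naples algorithm, with the forward direction handled by iterating Lemma \ref{main lemma} to flip every $1$ to a $0$. No gaps.
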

\begin{proof} Suppose that $\alpha$ is a Naples-parking function. Then $\beta = (0,0,\ldots,0)\in\{0,1\}^{n-1}$ is a set of choices that lead to parking. Therefore $\mathbb{P}(\alpha \text{ parks}) > 0$.

Now, suppose $\mathbb{P}(\alpha \text{ parks}) > 0$. Then there is a $\beta\in\{0,1\}^{n-1}$ such that $\alpha$ parks when making the choices from $\beta$. By Lemma \ref{main lemma}, we can change all the 1's to 0's, which implies that $\beta' = (0,0,\ldots,0)\in\{0,1\}^{n-1}$ also leads to $\alpha$ parking. Therefore $\alpha$ is a Naples parking function.
\end{proof}

The third pattern will be proved after we prove Theorem \ref{favorite thm}, but one direction is an easy consequence of \eqref{eq: counting}, because it follows that $f(q) > 0$ implies $q$ can be written as $q = a/b$ with $b = 2^{n-1}$ and $0\le a\le 2^{n-1}$.

We will now embark on the proof of our favorite result, pattern \ref{pattern 4}. To give a feel of how the proof will go, Table \ref{table: motivating favorite theorem} shows the $6$-tuples $\alpha$ that have probability $a/b$ with $a$ odd and $b = 2^{5}$.

\begin{table*}[h!]
\begin{center}
\begin{tabular}{|c|c|c|c|c|}\hline
$\alpha$ & (6,6,5,4,3,2) & (5,5,5,4,3,2) & (5,5,4,4,3,2) & (4,4,4,4,3,2) \\ \hline
$\mathbb{P}(\alpha \text{ parks})$ & $1/32$ & $3/32$ & $5/32$ & $7/32$  \\ \hline
$\alpha$ &(5,5,4,3,3,2) & (4,4,4,3,3,2) & (4,4,3,3,3,2) & (3,3,3,3,3,2) \\ \hline
$\mathbb{P}(\alpha \text{ parks})$ & $9/32$ & $11/32$ & $13/32$ & $15/32$ \\ \hline
$\alpha$ & (5,5,4,3,2,2) & (4,4,4,3,2,2) & (4,4,3,3,2,2)& (3,3,3,3,2,2) \\ \hline
$\mathbb{P}(\alpha \text{ parks})$ & $17/32$ & $19/32$ & $21/32$ & $23/32$  \\ \hline
$\alpha$ &(4,4,3,2,2,2) & (3,3,3,2,2,2) & (3,3,2,2,2,2) & (2,2,2,2,2,2) \\ \hline
$\mathbb{P}(\alpha \text{ parks})$ & $25/32$ & $27/32$ & $29/32$ & $31/32$   \\ \hline
\end{tabular}
\end{center}
\caption{16 parking preferences $\alpha$ with $\mathbb{P}(\alpha \text{ parks}) = \frac{k}{32}$ with $k$ is odd.  }\label{table: motivating favorite theorem}
\end{table*}

Note how the tuples have a very particular shape, namely \\ $\alpha_1=\alpha_2\ge\alpha_3\ge\cdots\ge\alpha_6 = 2$, where $\alpha_{i+1}\in\{\alpha_i-1,\alpha_i\}$.

The following lemma studies $g(\alpha)$ when $\alpha$ is of the form described above. This result will be useful in the proof of Theorem \ref{favorite thm}.

\begin{lemma}\label{lemma for favorite theorem}
 Suppose that cars $C_1,C_2,\ldots, C_n$ have preferred parking spots
 $$\alpha=(\underbrace{t,t,\ldots,t}_{m_t},\underbrace{t-1,t-1,\ldots,t-1}_{m_{t-1}},\ldots,\underbrace{3,3,\ldots,3}_{m_3},\underbrace{2,2,\ldots,2}_{m_2}),$$ where there are $m_2$ 2's, $m_3$ 3's, $\ldots$, $m_t$ $t$'s. Then
\begin{align*}
g(\alpha) = \,&\,2^{n-1} - 2^{n-m_2}+2^{n-m_2-1}-2^{n-m_2-m_3}\\&+2^{n-m_2-m_3-1}-2^{n-m_2-m_3-m_4}+\ldots+2^{m_t-1}-1.
\end{align*}
\end{lemma}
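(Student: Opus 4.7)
The plan is to count directly the binary tuples $(b_2, \ldots, b_n) \in \{0,1\}^{n-1}$ for which $\alpha$ parks, by tracking the state of the parking lot block by block. I would first prove a structural lemma: throughout the process, the set of occupied spots is a contiguous interval $\{l, l+1, \ldots, u\}$, and each car after the first extends this interval by exactly one spot (either to $l-1$ or to $u+1$). This is shown by induction; the weakly-decreasing shape of $\alpha$ together with $\alpha_i \ge 2$ ensures $\alpha_i \in \{l-1, l, \ldots, u\}$ at every step, so every possible outcome of the back-up-or-not choice leaves a contiguous interval.

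Given this invariant, I would classify each arriving car by the current left endpoint. Type A: $\alpha_i = l-1$ (the preferred spot is empty and the car parks there without consulting $b_i$). Type B: $\alpha_i = l$ (the bit $b_i$ decides; $b_i=0$ extends left to $l-1$ while $b_i=1$ extends right to $u+1$). Type C: $\alpha_i > l$ (both values of $b_i$ produce a right extension to $u+1$, so $b_i$ is immaterial). The key structural observation is that within a block $j$ (cars preferring spot $j$), as soon as one Type B car picks $b_i=0$ the value $l$ drops from $j$ to $j-1$ and every remaining car in that block becomes Type C; hence each block contributes at most one voluntary left extension. Since parking requires exactly $t-1$ left and $n-t$ right extensions after the first car, $\alpha$ parks if and only if the $2$-block produces a voluntary left extension.

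With these preliminaries, I would set up a recursion. Let $X_j \in \{0,1\}$ record whether $l = j-1$ (value $0$) or $l = j$ (value $1$) at the end of block $j$, with $X_{t+1} = 1$ by convention, and let $a_j(x)$ be the number of $b$-tuples for the cars in blocks $t, t-1, \ldots, j$ yielding $X_j = x$. Splitting on $Y_j = X_{j+1}$, which determines whether block $j$ begins with a Type A car, yields
\begin{align*}
a_j(1) &= 2\, a_{j+1}(1) + a_{j+1}(0), \\
a_j(0) &= (2^{m_j} - 2)\, a_{j+1}(1) + (2^{m_j} - 1)\, a_{j+1}(0),
\end{align*}
with base case $a_t(1) = 1$ and $a_t(0) = 2^{m_t - 1} - 1$. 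Writing $e_j = n - m_2 - \cdots - m_j$ and using $a_{j+1}(0) + a_{j+1}(1) = 2^{e_j - 1}$ (the total number of $b$-tuples for blocks $t$ through $j+1$), the first recurrence simplifies to $a_j(1) = a_{j+1}(1) + 2^{e_j - 1}$, which telescopes to $a_2(1) = 1 + \sum_{j=2}^{t-1} 2^{e_j - 1}$. Finally $g(\alpha) = a_2(0) = 2^{n-1} - a_2(1)$, and rewriting each $-2^{e_j - 1}$ as $-2^{e_j} + 2^{e_j - 1}$ recovers the alternating expression stated in the lemma.

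The main obstacle is the block-level analysis: one must carefully verify that once a voluntary left extension occurs in block $j$, every subsequent car in that block really is Type C and contributes a free factor of $2$ to the count. The contiguous-interval invariant makes this transparent once established, but it and the Type A/B/C trichotomy must be checked against the precise ``back up one spot, then move forward if that spot is also taken'' semantics.
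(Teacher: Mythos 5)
Your proposal is correct, and it reaches the formula by a route that differs from the paper's in a worthwhile way. The paper also peels off one block at a time, but it does so ``from the outside'': it shows the first $n-m_2$ cars occupy either $\{2,\ldots,n-m_2+1\}$ or $\{3,\ldots,n-m_2+2\}$, identifies the count of bit-tuples producing the first outcome with $g(\alpha')$ for the shifted tuple $\alpha'=(\alpha_1-1,\ldots,\alpha_{n-m_2}-1)$, computes the two cases for the final block of $2$'s to get $g(\alpha)=g(\alpha')(2^{m_2}-1)+(2^{n-m_2-1}-g(\alpha'))(2^{m_2}-2)=2^{n-1}-2^{n-m_2}+g(\alpha')$, and closes by induction. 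Your argument replaces the ``smaller instance of the same lemma'' step with an explicit forward two-state transfer recursion tracking only the left endpoint $l$; your transition for block $j$ is exactly the paper's single recursion step, applied at every level, and the telescoping of $a_j(1)=a_{j+1}(1)+2^{e_j-1}$ does the work of the paper's induction. What your version buys is that the contiguous-interval invariant and the Type A/B/C trichotomy are stated once and for all (the paper asserts the contiguity somewhat tersely), and the two-state bookkeeping makes it transparent why only the left endpoint matters; what the paper's version buys is that it never needs to discuss the state of the process in the middle of the lot, only at the interface with the block of $2$'s. I verified your recurrences, base case, and the algebra identifying $2^{n-1}-1-\sum_{j=2}^{t-1}2^{e_j-1}$ with the stated alternating sum; they are all correct.

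One point you flag but should actually nail down: a ``right extension to $u+1$'' is a parking failure when $u=n$, and such failures can occur before block $2$ is reached (e.g.\ for $\alpha=(3,3,3,3,2)$ the fourth car can run off the end). Your recursion silently treats these as if the car parked at a virtual spot $n+1$. This does not break the count, but it needs a sentence of justification: a failed car does not change $l$, so the transitions for $l$ are unaffected, and after the first failure at car $i+1$ one has $l=n-i+1$ with only $n-i-1$ cars remaining, each able to lower $l$ by at most one, so the final left endpoint is at least $2$; hence every tuple containing a failure is absorbed into $a_2(1)$ and never into $a_2(0)$. With that observation, $g(\alpha)=a_2(0)$ is exactly the set of tuples under which all $n$ cars park, and your proof is complete.
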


\begin{proof} If they are all 2's, then $g(\alpha) = 2^{n-1}-1$, because the only way $\alpha$ doesn't park is if $\beta=(1,1,\ldots,1)$. This covers the case $n=2$. We'll prove it by induction. Suppose the formula is accurate for $m$ cars for any $m\le n-1$. Now consider $\alpha =  (\underbrace{t,t,\ldots,t}_{m_t},\underbrace{t-1,t-1,\ldots,t-1}_{m_{t-1}},\ldots,\underbrace{3,3,\ldots,3}_{m_3},\underbrace{2,2,\ldots,2}_{m_2})$. If $t=2$, they are all 2's and we are done. Suppose $t\ge 3$. Now, note that after the first $n-m_2$ cars park, they either end up in spots $\{2,3,4,\ldots,n-m_2+1\}$ or in spots $\{3,4,\ldots,n-m_2+2\}$. That's because the numbers are consecutive, so there's never a bigger gap. Note that to end up in $\{2,3,4,\ldots,n-m_2+1\}$ is the equivalent of transforming $(\underbrace{t,t,\ldots,t}_{m_t},\underbrace{t-1,t-1,\ldots,t-1}_{m_{t-1}},\ldots,\underbrace{3,3,\ldots,3}_{m_3})$ into $\alpha'=(\underbrace{t-1,t-1,\ldots,t-1}_{m_t},\underbrace{t-2,t-2,\ldots,t-2}_{m_{t-2}},\ldots,\underbrace{2,2,\ldots,2}_{m_3})$ and seeing if $\alpha'$ parks. Now note that if you reach the last $m_2$ 2's with spots $\{2,3,\ldots, n-m_2+1\}$ taken, there are $2^{m_2}-1$ ways of parking (if any of the 2's backs up, $\alpha$ parks, otherwise it doesn't), whereas if you reach the last $m_2$ 2's with spots $\{3,4,\ldots,n-m_2+2\}$ taken, then there are $2(2^{m_2-1}-1)$ ways to park because the first car with preference 2 takes spot 2 automatically, and so whether the car wanted to go back or forward is irrelevant, contributing a factor of 2, and the the rest now succeed if any of the $m_2-1$ cars with preference 2 backs up, which contributes a factor of $2^{m_2-1}-1$. Therefore
$$g(\alpha) = g(\alpha')(2^{m_2}-1) + (2^{n-m_2-1}-g(\alpha'))(2^{m_2}-2) = 2^{n-1}-2^{n-m_2}+g(\alpha').$$
The claim follows from applying the induction hypothesis on $g(\alpha')$.
\end{proof}

We are now ready to prove Theorem \ref{favorite thm}.

\begin{proof}[Proof of Theorem \ref{favorite thm}] Table \ref{table: motivating favorite theorem} shows the Theorem is true for $n=6$ and one can easily check that it's also true for $n <  6$. We may assume $n>6$. Define $g=g(\alpha)$ as in \eqref{eq: counting}. We will first prove that if $g$ is odd, then $\alpha_1 = \alpha_2, \alpha_{i+1}=\alpha_i$ or $\alpha_{i+1} = \alpha_i-1$, and $\alpha_n=2$. Suppose $\alpha_2\ne \alpha_1$, then $C_1$ and $C_2$ parks in $\alpha_1$ and $\alpha_2$, respectively. Therefore, the value of $\beta_2$ is irrelevant, which implies that $g$ is even. Therefore $\alpha_1=\alpha_2$. We also know that $\alpha_i\ge 2$ because if for any $i\ge 2$ we have $\alpha_i=1$, then $\beta_i=0$ and $\beta_i=1$ both mean that the car searches forward, which implies $g$ is even. Consider $\alpha_3$. We know $C_1$ parks at $\alpha_1$, for $\beta_2=0$, $C_2$ parks at $\alpha_1+1$, and for $\beta_2=1$, $C_2$ parks at $\alpha_2-1$. Note that if $|\alpha_3-\alpha_2|\ge2$, then $\beta_3$ is irrelevant, which would then make $g$ even. If $\alpha_3 = \alpha_2+1$, then in the case where $C_2$ parks at $\alpha_2-1$ we have that $C_3$ parks at $\alpha_2+1$ without $\beta_3$ mattering, and in the case that $C_2$ parks at $\alpha_2+1$, $C_3$ parks at $\alpha_2+2$ (or doesn't park if $\alpha_2>n-2$), which means $\beta_3$ is irrelevant. Therefore $g$ is even whenever $\alpha_3\ge \alpha_2+1$ and when $\alpha_3\le \alpha_2-2$. Therefore $\alpha_3\in\{\alpha_2-1,\alpha_2\}$. Now suppose that for all $1\le i\le m-1$ we have $\alpha_{i+1}\in\{\alpha_i-1,\alpha_i\}$. Let's prove that $\alpha_{m+1}\in\{\alpha_m-1,\alpha_m\}$. The first $m$ cars are parks at $\{\alpha_m-1,\alpha_m,\ldots,\alpha_m+m-2\}$ or at $\{\alpha_m,\alpha_m+1,\ldots,\alpha_m+m-1\}$. If $\alpha_{m+1}\in\{\alpha_m+1,\ldots,\alpha_m+m-2\}$, then $C_{m+1}$'s preferred spot is taken and the spot behind them is also taken, so $\beta_{m+1}$ is irrelevant, which implies $g$ is even. If $\alpha_{m+1} = \alpha_m+m-1$, then in the first case $C_{m+1}$ parks at $\alpha_m+m-1$ and $\beta_{m+1}$ is irrelevant, or in the second case, since $\alpha_m+m-1$ and $\alpha_m+m-2$ are both taken, $C_{m+1}$ parks at $\alpha_m+m$ (or fails to park), in which case $\beta_{m+1}$ is irrelevant, forcing $g$ to be even. Therefore $\alpha_{m+1}\ne \alpha_m+m-1$. If $\alpha_{m+1} > \alpha_m+m-1$, then $C_{m+1}$ parks at $\alpha_{m+1}$ and $\beta_{m+1}$ is irrelevant. Therefore $\alpha_{m+1}\le \alpha_m$. If $\alpha_{m+1}\le \alpha_m-2$, then $C_{m+1}$ parks at $\alpha_{m+1}$ and $\beta_{m+1}$ is irrelevant, which means $g$ is even. Therefore $\alpha_{m+1}\in\{\alpha_m-1,\alpha_m\}$. Finally, to complete the proof of our claim, we need to prove $\alpha_n = 2$. Suppose $\alpha_n\ge 3$, then nobody parked at position 1 since $\alpha_i\ge \alpha_n$ for all $i<n$.

Therefore, if $g$ is odd, then
\begin{equation}\label{eq: inequality}
\alpha_1=\alpha_2 \ge \alpha_3\ge\cdots\ge\alpha_n = 2 \ \ \ \text{with } \alpha_{i+1}\in\{\alpha_i-1,\alpha_i\}.
\end{equation}
There are $2^{n-2}$ $\alpha$'s satisfying \eqref{eq: inequality} because $\alpha_n = 2$ and $\alpha_{n-1}$ has two choices $\{2,3\}$, then $\alpha_{n-2}$ has two choices $\{\alpha_{n-1},\alpha_{n-1}+1\}$, and so on, until $\alpha_2$ has two choices $\{\alpha_3,\alpha_3+1\}$ while $\alpha_1 = \alpha_2$. Note that all elements had those choices because if you increase one by one you get $\alpha_1=\alpha_2 = n, \alpha_3 = n-1, \ldots, \alpha_{n-1}=3,\alpha_n = 2$. So if $g(\alpha)$ is odd, then $\alpha$ is one of these $2^{n-2}$ possible $n$-tuples.

Let $A$ be the set of $\alpha\in\{1,2,\ldots,n\}^n$ satisfying \eqref{eq: inequality}. We know $|A| = 2^{n-2}$. Let $\alpha\in A$. Then $\alpha$ is of the form
\begin{equation}\label{alpha1}
\alpha = (\underbrace{t,t,\ldots,t}_{m_t},\underbrace{t-1,t-1,\ldots,t-1}_{m_{t-1}},\ldots,\underbrace{3,3,\ldots,3}_{m_3},\underbrace{2,2,\ldots,2}_{m_2}).
\end{equation}
By Lemma \ref{lemma for favorite theorem},
\begin{align*}
g(\alpha) = \,&\,2^{n-1} - 2^{n-m_2}+2^{n-m_2-1}-2^{n-m_2-m_3}\\&+2^{n-m_2-m_3-1}-2^{n-m_2-m_3-m_4}+\ldots+2^{m_t-1}-1.
\end{align*}
Since $m_t\ge 2$ because $\alpha_1=\alpha_2$, and $m_i\ge 1$ for $i\in\{2,3,\ldots,t-1\}$,
$$n-m_2>n-m_2-1\ge n-m_2-m_3 > n-m_2-m_3 \ge \cdots \ge m_t-1 \ge 1,$$
so all of the exponents in the powers of 2 are positive, and hence $g(\alpha)$ is odd. Therefore for an arbitrary $n$-tuple $\alpha$, $g(\alpha)$ is odd if and only if $\alpha\in A$.

Now, let $\alpha,\alpha'\in A$. We will show that $\alpha\ne \alpha'$ implies $g(\alpha) \ne g(\alpha')$. Write $\alpha$ as in \eqref{alpha1} and let $\alpha'$ be
\begin{equation}\label{alpha2}
\alpha' = (\underbrace{s,s,\ldots,s}_{m'_s},\underbrace{s-1,s-1,\ldots,s-1}_{m'_{s-1}},\ldots,\underbrace{3,3,\ldots,3}_{m'_3},\underbrace{2,2,\ldots,2}_{m'_2}).
\end{equation}
Then
$$g(\alpha') = 2^{n-1} - 2^{n-m'_2}+2^{n-m'_2-1}-2^{n-m'_2-m'_3}+\ldots+2^{m'_s-1}-1.$$
Suppose $g(\alpha) = g(\alpha')$. Let's prove $m_2= m'_2$. Without loss of generality, suppose $m_2 > m'_2$. From the proof of Lemma \ref{lemma for favorite theorem}, we can see that
$$g(\alpha) = g(\alpha_1-1, \alpha_2-1,\ldots, \alpha_{n-m_2}-1) + 2^{n-1} - 2^{n-m_2},$$
and
$$g(\alpha') = g(\alpha'_1-1, \alpha'_2-1,\ldots, \alpha'_{n-m'_2}-1) + 2^{n-1} - 2^{n-m'_2}.$$
Then
$$g(\alpha')\le 2^{n-m'_2-1} + 2^{n-1} - 2^{n-m'_2} = 2^{n-1}-2^{n-m'_2-1}-1.$$
But $g(\alpha)\ge 2^{n-1} - 2^{n-m_2}\ge 2^{n-1}-2^{n-m'_2-1} > g(\alpha')$.  Therefore $m_2 = m'_2$. But now, we have
$$g(\alpha_1-1, \alpha_2-1,\ldots, \alpha_{n-m_2}-1) = g(\alpha'_1-1,\alpha'_2-1,\ldots, \alpha'_{n-m_2}-1),$$
so the same argument shows $m_3 = m'_3$, and by induction, we conclude $\alpha = \alpha'$.

Therefore, all elements $\alpha\in A$ map (under $g$) to different odd numbers less than or equal to $2^{n-1}$. There are $2^{n-2}$ elements in $A$ and there are $2^{n-2}$ odd numbers in the set $\{1,2,\ldots, 2^{n-1}\}$. We also know that if $\alpha\not\in A$, then $g(\alpha)$ is not odd. Therefore, for every odd number $2t-1$ in the interval $[1, 2^{n-1}]$, there is exactly one $\alpha \in \{1,2,\ldots,n\}^n$ such that $g(\alpha) = 2t-1$. Therefore
$$\mathbb{P}(\alpha \text{ parks}) = \frac{2t-1}{2^{n-1}}.$$
\end{proof}

We finish the section by proving the part of pattern \ref{pattern 3} we hadn't proved.
\begin{theorem}
Given a number of the form $a/2^{n-1}$ with $0\le a\le 2^{n-1}$, there is at least one $\alpha\in\{1,2,\ldots,n\}^n$ such that $\mathbb{P}(\alpha \text{ parks}) = a/2^{n-1}$.
\end{theorem}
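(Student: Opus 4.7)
The plan is to induct on $n$, using Theorem \ref{favorite thm} to dispose of the odd values of $a$ and a lifting construction from $n-1$ to $n$ to handle the even values. The base case is $n=3$: here one checks by hand that $\alpha=(2,3,3),\,(3,3,2),\,(1,3,3),\,(2,2,2),\,(1,2,3)$ yield $g(\alpha)=0,\,1,\,2,\,3,\,4$ respectively, so every value in $\{0,1,\ldots,2^{n-1}\}$ is attained.

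For the inductive step at $n\ge 4$, the odd values $a=2t-1$ with $1\le t\le 2^{n-2}$ are supplied directly by Theorem \ref{favorite thm}. For an even value $a=2b$ with $0\le b\le 2^{n-2}$, I would invoke the inductive hypothesis to choose $\alpha'=(\alpha'_1,\ldots,\alpha'_{n-1})\in\{1,\ldots,n-1\}^{n-1}$ with $g'(\alpha')=b$, where $g'$ denotes the analogue of $g$ for length-$(n-1)$ tuples. The proposal is to lift $\alpha'$ to
\[
\alpha=(1,\,\alpha'_1+1,\,\alpha'_2+1,\,\ldots,\,\alpha'_{n-1}+1)\in\{1,\ldots,n\}^n
\]
and to prove the key identity $g(\alpha)=2g'(\alpha')=2b$.

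To establish this identity, first observe that $C_1$ parks at spot $1$ deterministically, and every subsequent car has preference at least $2$; in particular $C_2$'s preferred spot is still empty when $C_2$ arrives, rendering $\beta_2$ irrelevant and contributing a free factor of $2$. The remaining bits $(\beta_3,\ldots,\beta_n)$ should correspond bijectively to $(\beta'_2,\ldots,\beta'_{n-1})$ via the identification $\beta_i=\beta'_{i-1}$, in such a way that, under the shift $s\mapsto s-1$ carrying the lot $\{2,\ldots,n\}$ to $\{1,\ldots,n-1\}$, the parking process of $\alpha$ (after $C_1$) coincides with the parking process of $\alpha'$. For $\alpha_i\ge 3$ this correspondence is immediate. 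The main obstacle will be the boundary case $\alpha_i=2$ with $\beta_i=0$: here $C_i$ backs up to the already occupied spot $1$ and searches forward from there, while in the shifted process $C'_{i-1}$ has preference $1$ and $\beta'_{i-1}=0$, cannot back up below $1$, and therefore also stays at the (also occupied) spot $1$ before searching forward. Confirming that these two trajectories land in the same spot under the shift is the one delicate step; once granted, $g(\alpha)=2g'(\alpha')=2b=a$, and the induction is complete.
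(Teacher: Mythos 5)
Your proposal is correct and follows essentially the same route as the paper: induction on $n$, with odd $a$ handled by Theorem \ref{favorite thm} and even $a=2a'$ handled by lifting an $(n-1)$-tuple $\alpha'$ to $\alpha=(1,\alpha'_1+1,\ldots,\alpha'_{n-1}+1)$ so that $g(\alpha)=2g'(\alpha')$. The boundary case you flag as delicate ($\alpha_i=2$ with $\beta_i=0$) does go through exactly as you describe, since in both processes the car's backed-up spot is occupied and it ends at the first empty spot at or beyond its preference; the paper asserts this equivalence even more briefly than you do.
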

\begin{proof}
It is easy to verify the statement is true for small $n$, for example, for $n = 7$, Table \ref{table distribution} proves it for $a < 2^6$, for $a = 2^6$ it follows from the fact that there are $8^6>0$ parking functions. We may assume the statement is true for some $n-1$ and we want to prove it for $n$.

Theorem \ref{favorite thm} implies the statement is true for $a$ odd. Suppose $a$ is even, then $a = 2a'$ for some integer $a'\le 2^{n-2}$. Then, by the induction hypothesis, there is an $(n-1)$-tuple $\alpha'\in\{1,2,\ldots,n-1\}^{n-1}$ such that $\mathbb{P}(\alpha' \text{ parks}) = a'/2^{n-2}$. Let $$\alpha=(1,\alpha'_1+1,\alpha'_2+1,...,\alpha'_{n-1}+1).$$
The probability that $\alpha$ parks is the same as the probability that $\alpha'$ parks because after $C_1$ takes spot 1, then with $\alpha_i = \alpha'_{i-1}+1$, the cars $C_2, C_3,\ldots, C_n$ can only take spots between 2 and $n$, since their preferences are shifted by 1, it is as if the preferences were $\alpha'$ and they wanted to park on spots from 1 to $n-1$. Therefore
$$\mathbb{P}(\alpha \text{ parks}) = \frac{a'}{2^{n-2}} = \frac{2a'}{2^{n-1}} = \frac{a}{2^{n-1}}.$$
\end{proof}

\section{Comparing Naples parking to random-Naples and some asymptotics}\label{section ineqs}
Theorem \ref{thm comparing} is basically showing that $T_p(n)$ is bounded above by the line with parameter $p$ connecting $N(n)$ to $(n+1)^{n-1}$, which would give a naive estimate of what $T_p(n)$ could be.

Let $P(n) = (n+1)^{n-1}$ be the number of parking functions. The following recursive formula for $P(n)$ has been proved in \cite{Konheim} and \cite{Harris-Naples}, but can also be seen as plugging in $p = 0$ into our recursive formula \eqref{recursive eq} for $T_{p,k}(n)$ (because if $p = 0$, then for $\alpha$ to park, $\alpha$ must be a parking function)
\begin{equation}\label{recursion parking}
P(n) = \sum_{i=0}^{n-1}{{n-1\choose i}P(i){(n-i)}^{n-i-2}}(i+1).
\end{equation}
Plugging $k=1$ into \eqref{naples-orig} (or plugging $p=1$, $k=1$ into \eqref{recursive eq}) we get
\begin{equation}\label{naples parking recursion}
N(n)=\sum_{i=0}^{n-1}{{n-1\choose i}N(i){(n-i)}^{n-i-2}}(i+1+\min\{1,n-i-1\}).
\end{equation}
\begin{proof}[Proof of Theorem \ref{thm comparing}] Table \ref{evs} shows that the theorem is true for $1\le n\le 8$. Suppose the theorem is true for all $i\le n-1$. That $T(n) > P(n)$ follows directly from \eqref{recursion parking} and \eqref{recursive eq}. For the upper, from the induction hypothesis, \eqref{recursion parking} and \eqref{naples parking recursion}, we have
\begin{align*}
T(n) &=\sum_{i=0}^{n-1}{{n-1\choose i}T(i){(n-i)}^{n-i-2}}(i+1+\frac{1}{2}\min\{1,n-i-1\})\\
&\le \sum_{i=0}^{n-1}{{n-1\choose i}\left(\frac{N(i)+P(i)}{2}\right){(n-i)}^{n-i-2}}(i+1+\frac{1}{2}\min\{1,n-i-1\})\\
&= \sum_{i=0}^{n-1}{{n-1\choose i}\frac{N(i)}{2}{(n-i)}^{n-i-2}}(i+1+\frac{1}{2}\min\{1,n-i-1\})\\
&\ \ \ +\sum_{i=0}^{n-1}{{n-1\choose i}\frac{P(i)}{2}{(n-i)}^{n-i-2}}(i+1+\frac{1}{2}\min\{1,n-i-1\})\\
&=\frac{N(n)+P(n)}{2} - \sum_{i=0}^{n-1}{{n-1\choose i}\frac{N(i)-P(i)}{4}{(n-i)}^{n-i-2}}\min\{1,n-i-1\}\\
&\le \frac{N(n)+P(n)}{2}.
\end{align*}
\end{proof}

\providecommand{\bysame}{\leavevmode\hbox to3em{\hrulefill}\thinspace}
\providecommand{\MR}{\relax\ifhmode\unskip\space\fi MR }
\providecommand{\MRhref}[2]{%
  \href{http://www.ams.org/mathscinet-getitem?mr=#1}{#2}
}
\providecommand{\href}[2]{#2}

\end{document}